\theoremstyle{plain}
\newtheorem{theorem}{Theorem}[section]
\newtheorem{proposition}[theorem]{Proposition}
\newtheorem{lemma}[theorem]{Lemma}
\newtheorem{observation}[theorem]{Observation}
\newtheorem{corollary}[theorem]{Corollary}
\newtheorem{claim}[theorem]{Claim}
\theoremstyle{definition}
\newtheorem{question}[theorem]{Question}
\newtheorem{remark}[theorem]{Remark}
\newtheorem{example}[theorem]{Example}
\title[A system of disjoint representatives of line segments]{A system of disjoint representatives of line segments with given $k$ directions}
\author{Jinha Kim}
\address{Discrete Mathematics Group, IBS, Daejeon, South Korea}
\email{jinhakim@ibs.re.kr}
\author{Minki Kim}
\address{Discrete Mathematics Group, IBS, Daejeon, South Korea}
\email{minkikim@ibs.re.kr}
\author{O-Joung Kwon}
\address{Department of Mathematics, Incheon National University, Incheon, South Korea and Discrete Mathematics Group, IBS, Daejeon, South Korea}
\email{ojoungkwon@gmail.com}
\thanks{This work was supported by the Institute for Basic Science (IBS-R029-C1). O-J. Kwon was supported by the National Research Foundation of Korea funded by the Ministry of Education (No. NRF-2018R1D1A1B07050294).}
\date{\today}
\begin{document}
\maketitle
\begin{abstract}
We prove that for all positive integers $n$ and $k$, there exists an integer $N = N(n,k)$ satisfying the following.
If $U$ is a set of $k$ direction vectors in the plane and $\mathcal{J}_U$ is the set of all line segments in direction $u$ for some $u\in U$, then for every $N$ families $\mathcal{F}_1, \ldots, \mathcal{F}_N$, each consisting of $n$ mutually disjoint segments in $\mathcal{J}_U$, there is a set $\{A_1, \ldots, A_n\}$ of $n$ disjoint segments in $\bigcup_{1\leq i\leq N}\mathcal{F}_i$ and distinct integers $p_1, \ldots, p_n\in \{1, \ldots, N\}$ satisfying that $A_j\in \mathcal{F}_{p_j}$ for all $j\in \{1, \ldots, n\}$. We generalize this property for underlying lines on fixed $k$ directions to $k$ families of simple curves with certain conditions.

\end{abstract}
\section{Introduction}
Given a positive integer $m$, let $[m] := \{j \in \mathbb{Z}: 1 \leq j \leq m\}$.
For positive integers $n \leq m$ and families $\mathcal{F}_1,\ldots,\mathcal{F}_m$ of non-empty sets, a {\em system of distinct representatives of size $n$} is a set $\{A_1,\ldots,A_n\}$ of mutually distinct sets in $\bigcup_{i\in[m]}\mathcal{F}_i$ such that there exist $n$ distinct numbers $p_1, \ldots, p_n \in [m]$ where $A_j \in \mathcal{F}_{p_j}$ for all $j\in [n]$.
A \emph{system of disjoint representatives} is a natural generalization of a system of distinct representatives, where we additionally require selected sets to be mutually disjoint. 
Throughout this paper, we abbreviate a ``system of disjoint representatives'' as an ``SDR''.
When the size $n$ of an SDR $R$ is the same as the number $m$ of given families, we say that $R$ is a {\em complete SDR}.

Rainbow matching is a central concept in the study of systems of disjoint representatives. Given a set of matchings $M_1, \ldots, M_x$ in a graph, a \emph{rainbow matching} of size $k$ is a matching with edges $e_1, \ldots, e_k$ such that there exist distinct integers $p_1, \ldots, p_k\in [x]$ where $e_j\in M_{p_j}$ for all $j\in [k]$. Considering the set of edges in a graph as objects, a rainbow matching can be seen as a system of disjoint representatives.
Brualdi and Ryser~\cite{BR91}, and Stein~\cite{Stein75} conjectured that for every set of $n$ mutually disjoint matchings of size $n$ in $K_{n,n}$, there is a rainbow matching of size $n-1$. When $n$ is even, there is a set of $n$ mutually disjoint matchings of size $n$ in $K_{n,n}$ that has no rainbow matching of size $n$. In other words, we cannot always guarantee to find a complete SDR.
Here is a more general conjecture, which was introduced by Aharoni and Berger~\cite{AB09}: for every set of $n-1$ matchings (not necessarily disjoint) of size $n$ in a bipartite graph, there is a rainbow matching of size $n-1$.

What is the minimum number of matchings of size $n$ so that we can always find a rainbow matching of size $n$?
In this direction, Drisko~\cite{Dri98} showed that every set of $2n-1$ matchings of size $n$ in $K_{n,n}$ has an SDR of size $n$. 
Bar\'at, Gy\'arf\'as, and S\'ark\"{o}zy~\cite{BGS17} conjectured that without bipartiteness, every set of $2n$ matchings of size $n$ contains a rainbow matching of size $n$.
Aharoni et al.~\cite{ABC+19} proved that $3n-2$ matchings are sufficient, and it was recently improved to $3n-3$ by Aharoni, Biggs, Kim, and Kim~\cite{ABKK20}.
See also \cite{AHZ19, BK19, HL20} for topological approaches to this kind of problems.

Rainbow independent sets in graphs also have been studied~\cite{ABKK19, dense, KL20} about the following analogue of the question: what is the minimum number of independent sets of size $n$ so that we can find a rainbow independent set of size $n$?
In fact, systems of disjoint representatives can be written in terms of rainbow independent sets: given families $\mathcal{F}_1, \ldots, \mathcal{F}_n$, we create an intersection graph of objects in $\bigcup_{i\in[m]}\mathcal{F}_i$, and then a system of disjoint representatives corresponds to a rainbow independent set in the intersection graph. 
 
Generally, we may consider the following parameter. Given a family $\mathcal{F}$ of sets and a positive integer $n$, let $f_\mathcal{F}(n)$ be the minimum integer $k \geq n$ satisfying the following: for every $k$ subfamilies of $\mathcal{F}$ (not necessarily distinct), each consisting of $n$ disjoint members of $\mathcal{F}$, there is an SDR of size $n$. For example, if $\mathcal{F}$ is the set of all edges in a graph $G$, then $f_{\mathcal{F}}(n)$ is exactly the minimum number of matchings of size $n$ in $G$ that guarantee to have a rainbow matching of size $n$.

In this paper, we obtain lower and upper bounds of $f_{\mathcal{F}}(n)$ when $\mathcal{F}$ is a set of simple curves in the plane under certain conditions.
A \emph{simple curve} is an injective continuous function $c$ from an interval to $\mathbb{R}^2$.
For a simple curve $c:I\to\mathbb{R}^2$, the image of any subinterval of $I$ is called a {\em segment} on $c$.
For a direction vector $u$ in the plane and $v \in \mathbb{R}^2$, a line of the form $\{ut + v: t\in\mathbb{R}\}$ is a {\em line in direction $u$}.
A {\em line segment} is a simple curve on a line.
A line segment is said to be {\em in direction $u$} if it is a subset of a line in direction $u$.

We prove that for every $n$, $f_{\mathcal{J}_U}(n)$ is bounded from above by a function of $n$ and $|U|$, where $\mathcal{J}_U$ is the set of all line segments in direction $u$ for some $u\in U$.
\begin{theorem}\label{thm:main}
    Given positive integers $n$ and $k$, there exists a positive integer $N = N(n,k)$ such that the following holds.
    Let $U$ be a set of $k$ direction vectors in the plane. 
    If $\mathcal{J}_U$ is the set of all line segments in direction $u$ for some $u\in U$, then $f_{\mathcal{J}_U}(n) \leq N$.
\end{theorem}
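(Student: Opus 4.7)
My plan is to prove the theorem by induction on $k$, reducing the $k$-direction case to cases with fewer directions. The base case $k=1$ concerns line segments in a single direction $u$. Here, segments on distinct lines in direction $u$ are automatically disjoint, and on each individual line the segments are intervals; thus the intersection graph of $\mathcal{J}_{\{u\}}$ is a disjoint union of interval graphs indexed by the perpendicular coordinate. For this case I would aim to prove a bound $f_{\mathcal{J}_{\{u\}}}(n) \le N_1(n)$ by a direct argument: order the segments across a family first by the perpendicular projection of their carrying line and then by left endpoint within each line, and build an SDR greedily by iteratively picking, from the next unused family, a segment whose left endpoint is to the right of all previously chosen segments on the same line. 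This is in the spirit of the Aharoni--Berger interval theorems and should yield a polynomial bound in $n$.

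For the inductive step, given $N$ families of $n$ disjoint segments in $k+1$ directions, I would begin with a pigeonhole over directions: in each family some direction carries at least $\lceil n/(k+1) \rceil$ segments, so by pigeonhole across the $N$ families there is a direction $u$ and a large sub-collection $\mathcal{F}_{i_1}, \ldots, \mathcal{F}_{i_M}$ such that every $\mathcal{F}_{i_j}$ contains many pairwise disjoint segments in direction $u$. The plan is then to apply the base case within this sub-collection to extract a partial rainbow system of disjoint segments in direction $u$, and then to complete it using segments in the remaining $k$ directions drawn from families not yet used, appealing to the inductive hypothesis on $k$.

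The main obstacle will be the compatibility between the segments chosen in direction $u$ and those chosen later in the other $k$ directions: since a single line segment in direction $u$ can cross every segment of another family when those segments lie on many parallel lines transverse to $u$, naive concatenation of the two phases can fail entirely. To overcome this, I expect to have to select the direction-$u$ segments to be spatially localized, for instance by first restricting to a strip between two consecutive lines in direction $u$ inside which the other families still retain enough disjoint segments, or alternatively by invoking a Ramsey-type argument that passes to a large sub-collection of families in which the combinatorial ``type'' of each family (the pattern of which segments live on which parallel lines, and their relative orders) is constant. Such a homogeneous sub-collection has only boundedly many types as a function of $n$ and $k$, and in a fixed type the compatibility between the two phases becomes a tractable combinatorial condition.

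The fact that the paper extends the result to simple curves with the pseudoline-type axioms indicated in the abstract is evidence that the argument uses only the combinatorial crossing pattern and not Euclidean coordinates. I would therefore structure the proof so that its two ingredients are (i) the one-direction / parallel-intervals lemma and (ii) a direction-stripping reduction driven by pigeonhole and Ramsey that does not use any genuinely metric feature of the plane, thereby opening the door to the curve-theoretic generalization.
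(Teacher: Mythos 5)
There is a genuine gap, and it sits exactly where you flag the ``main obstacle'': the completion phase of your induction is not resolved, and neither of your proposed fixes works as stated. After the pigeonhole you hold a sub-collection in which each family has only about $n/(k+1)$ disjoint segments in direction $u$, so phase one can only produce a partial rainbow set of that size; you must then find the remaining representatives in the other $k$ directions, from other families, disjoint from the phase-one segments. Your strip idea does not help: a segment transverse to $u$ meets every strip bounded by lines in direction $u$ that it crosses, and since each family has only $n$ segments there is no guarantee that any strip retains enough disjoint segments from enough families. Your Ramsey-over-types idea fails at the counting step: the number of distinct carrier lines occurring in the collection is unbounded in terms of $n$ and $k$, so the number of ``types'' (which segments lie on which parallel lines, with relative orders) is \emph{not} bounded, and you cannot pass to a type-homogeneous sub-collection of controlled size without first bounding the number of lines --- which your argument has no mechanism to do. Finally, even granting a phase-one selection, the inductive hypothesis is not applicable in the form you invoke it: the unused families need not contain $n-s$ pairwise disjoint segments lying in the remaining $k$ directions.

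The paper avoids the two-phase completion entirely by a dichotomy you are missing, whose crux is a lemma of independent interest (their Theorem~\ref{thm: few lines}): if, after pigeonholing so that each family has exactly $m<n$ segments in direction $u$, those segments spread over more than $m(n-m)$ parallel lines, then $n+m-1$ families already admit an SDR of size $n$ \emph{using direction-$u$ segments alone} --- so no mixing with other directions is ever needed and the compatibility problem never arises. In the complementary case, every direction uses boundedly many lines, hence there are boundedly many intersection points; a pigeonhole over the vector recording, for each intersection point, which line ``owns'' it in a given family (Lemma~\ref{lem:curves}) produces a large sub-collection whose segments all live on a system of pairwise disjoint arcs, where $f_{\mathcal{J}}(n)=n$ by the one-direction result (Proposition~\ref{lem:samesize}). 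Your base case corresponds to that proposition (the known bound is in fact $f=n$, not merely polynomial), but without an analogue of the ``many lines in one direction forces a full SDR within that direction'' lemma, your induction on $k$ does not go through.
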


Aharoni, Briggs, Kim, and Kim~\cite{ABKK19} showed that $f_\mathcal{F}(n) = n$ when $\mathcal{F}$ is a family of intervals in $\mathbb{R}$.
Thus, Theorem~\ref{thm:main} generalizes the finiteness of $f_\mathcal{F}(n)$ for the set $\mathcal{F}$ of all intervals in a line, to the set of all line segments with finite possible direction vectors.
Intersection graphs of segments with $k$ directions are called \emph{$k$-DIR}~\cite{KN90}, and our result can be translated to rainbow independent sets for $k$-DIR graphs.

\begin{corollary}\label{cor:kdir}
Given positive integers $n$ and $k$, there exists a positive integer $N=N(n,k)$ such that for every $k$-DIR graph $G$ and a family of $N$ independent sets of size $n$ in $G$, there is a rainbow independent set of size~$n$.
\end{corollary}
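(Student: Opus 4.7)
The plan is to deduce Corollary~\ref{cor:kdir} directly from Theorem~\ref{thm:main} by unpacking the definition of a $k$-DIR graph. Set $N := N(n,k)$ as provided by Theorem~\ref{thm:main}, and let $G$ be a $k$-DIR graph equipped with independent sets $I_1,\ldots,I_N$ of size $n$. By definition $G$ comes with a geometric representation: a set $U$ of at most $k$ direction vectors together with a map $\phi$ from $V(G)$ to segments in $\mathcal{J}_U$ such that $uv\in E(G)$ if and only if $\phi(u)\cap\phi(v)\neq\emptyset$. If $|U|<k$, I would pad $U$ with arbitrary extra directions, since enlarging $U$ only enlarges $\mathcal{J}_U$ and so Theorem~\ref{thm:main} still applies with the constant $N(n,k)$.

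Next, I would transfer the problem to the geometric side by setting $\mathcal{F}_i := \phi(I_i)$ for each $i\in[N]$. The restriction of $\phi$ to $I_i$ must be injective: if two distinct vertices of $I_i$ had the same image, their images would trivially intersect and the two vertices would be adjacent, contradicting independence. For the same reason the $n$ segments in $\mathcal{F}_i$ are pairwise disjoint. Hence each $\mathcal{F}_i$ is a family of $n$ mutually disjoint segments in $\mathcal{J}_U$, and Theorem~\ref{thm:main} produces an SDR $\{A_1,\ldots,A_n\}$ together with distinct indices $p_1,\ldots,p_n\in[N]$ satisfying $A_j\in\mathcal{F}_{p_j}$.

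Finally, for each $j\in[n]$ I would choose the (necessarily unique) vertex $v_j\in I_{p_j}$ with $\phi(v_j)=A_j$. The vertices $v_1,\ldots,v_n$ are pairwise distinct, because the segments $A_j$ are distinct as members of an SDR, and $\{v_1,\ldots,v_n\}$ is an independent set in $G$, because no two of the $A_j$ meet. Since $v_j\in I_{p_j}$ and the $p_j$ are distinct, this is the required rainbow independent set of size $n$.

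I do not expect any substantive obstacle here: the corollary is essentially a dictionary translation between $k$-DIR graphs and families of line segments on $k$ directions. The only care points are handling a representation with fewer than $k$ directions (settled by padding) and verifying in the transfer step that independence in $G$ corresponds exactly to geometric disjointness of the representing segments, which is immediate from the definition of $k$-DIR.
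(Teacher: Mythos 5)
Your proposal is correct and matches the paper's intent: the paper treats Corollary~\ref{cor:kdir} as an immediate translation of Theorem~\ref{thm:main} via the intersection-representation of a $k$-DIR graph, which is exactly the dictionary argument you spell out (including the minor care points of padding $U$ and the injectivity of the representation on each independent set). No gap.
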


We essentially use the facts that every point in $\mathbb{R}^2$ may intersect only finite number of lines with given direction vectors and that two distinct lines with same direction are disjoint. Based on this observation, we generalize Theorem~\ref{thm:main} into a flexible form for simple curves as follows. Here, we relax the condition of Theorem~\ref{thm:main} so that two simple curves in different groups meet bounded number of times.  Clearly, Theorem~\ref{thm:main2} implies Theorem~\ref{thm:main}.

\begin{theorem}\label{thm:main2}
Given positive integers $n, k$ and $t$, there exists a positive integer $M = M(n,k,t)$ such that the following holds. Let $\beta_1, \ldots, \beta_k$ be sets of simple curves in $\mathbb{R}^2$ such that
\begin{itemize}
    \item for each $i\in [k]$, $\beta_i$ is a set of mutually disjoint simple curves (not necessarily finite), and
    \item for every distinct $i,j\in [k]$, $P\in \beta_i$, and $Q\in \beta_j$, $P$ and $Q$ intersect on at most $t$ points.
\end{itemize}
If $\mathcal{J}$ is the set of all segments of simple curves in $ \bigcup_{i\in[k]}\beta_i$, then $f_{\mathcal{J}}(n)\leq M$.
\end{theorem}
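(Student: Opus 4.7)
The proof proceeds by induction on $k$. For the base case $k=1$, the curves in $\beta_1$ are pairwise disjoint, so two segments intersect if and only if they are sub-arcs of a common curve. Because only finitely many curves of $\beta_1$ carry a segment of some $\mathcal{F}_p$, one may parametrize each such curve by a disjoint interval of $\mathbb{R}$, embedding every segment as an interval on the real line with intersection preserved. The Aharoni--Briggs--Kim--Kim theorem ($f_\mathcal{F}(n) = n$ for intervals) then yields $M(n, 1, t) \leq n$.

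For the inductive step, assume $M(n, k-1, t)$ exists. Given $\mathcal{F}_1, \ldots, \mathcal{F}_N$, I would first apply pigeonhole on the type vector $\bigl(|\mathcal{F}_p \cap \beta_i|\bigr)_{i=1}^k$ (which takes at most $\binom{n+k-1}{k-1}$ values) to restrict to a large sub-collection of families sharing a common type $(n_1, \ldots, n_k)$. If $n_k = 0$, the induction hypothesis on $k-1$ immediately finishes the proof, so assume $n_k \geq 1$.

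The strategy in this case is to build the SDR in two stages. First, applying the base case inside $\beta_k$ to the group-$k$ restrictions $\mathcal{F}_p^{(k)} := \mathcal{F}_p \cap \beta_k$, one obtains $n_k$ pairwise disjoint group-$k$ segments $B_1, \ldots, B_{n_k}$ from $n_k$ different families. Second, one seeks $n - n_k$ pairwise disjoint segments in groups $1, \ldots, k-1$ from $n - n_k$ other families, all disjoint from the blocking set $\bigcup_j B_j$. Since each $B_j$ meets every curve in $\beta_i$ for $i \neq k$ in at most $t$ points, the blocking set contributes at most $n_k t$ crossing points to each such curve, and consequently blocks only a controlled number of segments per candidate family. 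Ideally, one would then restrict each remaining family to its unblocked group-$\{1,\ldots,k-1\}$ segments and invoke the induction hypothesis on $k-1$ to complete the SDR.

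The main obstacle is that an adversarial configuration may concentrate the blocking points so that every group-$i$ segment of a particular residual family is hit, leaving no room for the $k-1$ induction. To overcome this, I would replace the na\"ive selection of $B_1, \ldots, B_{n_k}$ by a Ramsey/pigeonhole selection: among all candidate group-$k$ rainbow disjoint sets of size $n_k$ produced by the base case, some choice blocks only a controlled number of lower-group segments across a large residual sub-collection of families, leaving in it at least $n - n_k$ unblocked disjoint segments in each family. Quantifying the losses at each step (pigeonhole on types, base case inside $\beta_k$, Ramsey/pigeonhole for blocking compatibility, and the induction hypothesis on $k-1$) gives an explicit recursive bound for $M(n, k, t)$ in terms of $M(n, k-1, t)$, completing the induction.
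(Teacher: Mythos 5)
Your base case and the opening pigeonhole on type vectors are fine and match the paper's first step. The gap is in the inductive step, and it is exactly at the point you flag yourself: the claim that among all candidate group-$k$ rainbow sets $B_1,\ldots,B_{n_k}$ ``some choice blocks only a controlled number of lower-group segments across a large residual sub-collection of families'' is asserted, not proved, and it is the entire difficulty of the theorem. The crossing bound $t$ gives you at most $n_k t n_i$ blocking points on the curves carrying a given family's group-$i$ segments, but since that family only has $n_i$ such segments this bound is vacuous: a single adversarial choice of $B_j$'s can kill every lower-group segment of every residual family, and nothing in your argument (no pigeonhole over a finite index set, no extremal choice) identifies a structure over which a Ramsey-type selection could run --- the set of candidate rainbow sets in $\beta_k$ can be infinite and the blocking patterns arbitrary. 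Moreover, Example~\ref{ex:n-m,m} shows that roughly $n^2/4$ families of mixed type can have no SDR at all, so any correct selection argument must exploit the large number of families quantitatively; your proposal never does so, and the recursion ``$M(n,k,t)$ in terms of $M(n,k-1,t)$'' is left without any mechanism that would make the two stages compatible.

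The paper avoids this two-stage selection entirely by a dichotomy that your outline has no analogue of. If, within the common-type subfamily, some $\beta_i$ contributes more than $n_i(n-n_i)$ curves carrying segments, then Theorem~\ref{thm: few lines} (a genuinely new ingredient: $n+m-1$ families of $m$ disjoint segments spanning at least $m(n-m)+1$ parallel lines already force an SDR of size $n>m$, using only that one group) finishes immediately. Otherwise every group lives on boundedly many curves, so the total number of intersection points is bounded by a function of $n,k,t$; one then pigeonholes on the vector recording, for each intersection point, which curve's segment (if any) of a set covers it (Lemma~\ref{lem:curves}), after which all surviving sets lie in a common union of pairwise disjoint arcs and Proposition~\ref{lem:samesize} applies. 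If you want to rescue your induction on $k$, you would need to prove a statement playing the role of this dichotomy --- e.g.\ that either the lower-group segments spread over many curves (in which case they alone yield an SDR) or everything is confined to few curves (in which case blocking can be controlled by pigeonhole on finitely many crossing points); as written, the proposal does not contain that idea and does not constitute a proof.
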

In Remark~\ref{rem:unbounded}, we discuss that if we remove the condition that two simple curves from different groups meet bounded number of times, then $f_{\mathcal{J}}(n)$ is not bounded.


To prove Theorem~\ref{thm:main2}, we need to argue that if there is a sufficiently large family $\mathcal{A}$ of disjoint unions of $n$ simple curves where each simple curve is a segment of some curve in $\bigcup_{i \in [k]}\beta_i$, then we can always find an SDR of size $n$. By perturbing endpoints of given segments if necessary, we can assume that all their endpoints are distinct. Therefore, we may further assume that all the segments are closed. 

By the pigeonhole principle, we first collect a subset $\mathcal{A}_1$ of $\mathcal{A}$ where each set in $\mathcal{A}_1$ contains exactly same number of segments in each of $\beta_1, \ldots, \beta_k$.
Then we focus on some $\beta_i$ and restrictions of $\mathcal{A}_1$ on $\beta_i$. We show that if there are many simple curves of $\beta_i$ that contain some segment of $\mathcal{A}_1$, then we can find an SDR of size $n$.
This part is separately discussed in Section~\ref{sec:one direction}.
Thus, we can assume that there are only restricted number of simple curves in $\beta_i$ that contain a segment from $\mathcal{A}_1$.
This means that we reduce the case where the size of each $\beta_i$ is bounded by some function of $n$ and $t$. 

Now, we use the fact that every point in $\mathbb{R}^2$ is contained in at most $t$ curves in $\bigcup_{i \in [k]}\beta_i$. Let $p$ be the total number of intersections of the simple curves in $\bigcup_{i \in [k]}\beta_i$, and give the name $v_1, v_2, \ldots, v_p$ to the intersection points. 
For each set $X$ in $\mathcal{A}_1$, we obtain a vector $Q_X \in \mathbb{Z}^p$ such that for each $j\in [p]$,
\begin{itemize}
    \item if $v_j$ is contained in $X$ and the segment of $X$ containing $v_j$ is a segment of a simple curve in $\beta_z$, then the $j$-th coordinate of $Q_X$ is $z$, and
    \item otherwise the $j$-th coordinate of $Q_X$ is $q$ for some curve $\beta_q$ containing $v_j$. 
\end{itemize} 
Applying the pigeonhole principle, we can obtain a large subset $\mathcal{A}_2$ of $\mathcal{A}_1$ whose corresponding vectors are all the same. Then it is not difficult to see that all sets of $\mathcal{A}_2$
are contained in some disjoint union of simple curves, and we derive the result by simple arguments.

The details of the proof of Theorem~\ref{thm:main2} will be given in Section~\ref{sec: few directions}.
In Section~\ref{sec: axis-parallel}, we discuss some lower bounds and provide open problems.

Throughout the paper, we use the following notation.
For two real numbers $x < y$, we write $[x,y] := \{z \in \mathbb{R}: x \leq z \leq y\}$, $(x,y) := \{z \in \mathbb{R}: x < z < y\}$, and $[x,y]_\mathbb{Z} := [x,y]\cap \mathbb{Z}$.

\begin{remark}
$f_\mathcal{F}(n)$ may not be bounded when $\mathcal{F}$ is a family of ``fat'' objects, that is, convex sets with non-empty interiors. For example, if $\mathcal{B}$ is the family of all axis-parallel boxes in the plane, then $f_\mathcal{B}(n) = \infty$ for any $n \geq 4$: for an arbitrary positive integer $k$, we can construct a family $\mathcal{F}$ of axis-parallel boxes whose intersection graph is the $k$-th power of $C_{4(k+1)}$. 
See Figure~\ref{fig:cyclepower} for an example for $k = 2$. 
\begin{figure}[htbp]
    \centering
    \includegraphics[scale=0.8]{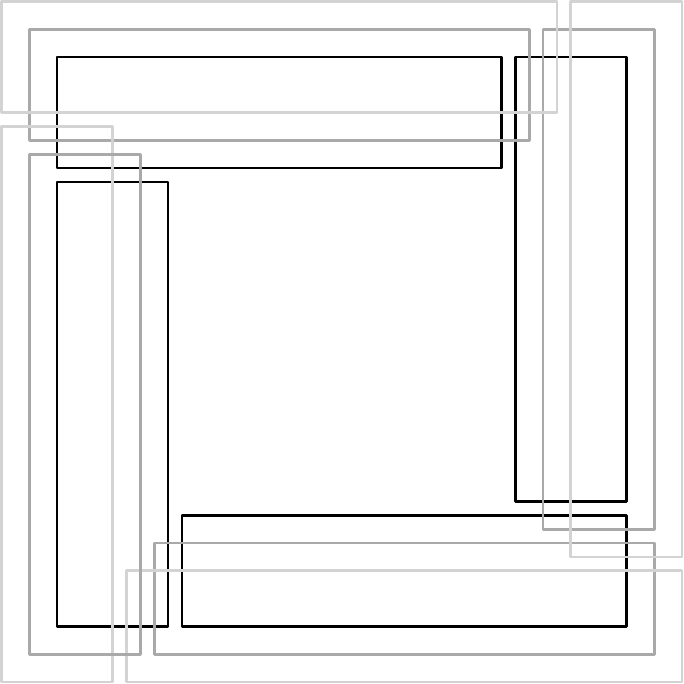}
    \caption{A family of $12$ axis-parallel boxes whose intersection graph is the $C_{12}^2$.}
    \label{fig:cyclepower}
\end{figure}

By \cite[Theorem 5.5]{ABKK19}, we can find a collection of $3(k+1)$ subfamilies of $\mathcal{F}$ such that each consists of $4$ disjoint boxes of $\mathcal{F}$ and there is no SDR of size $4$.
By adding $n-4$ mutually disjoint axis-parallel boxes that are disjoint from any of $\mathcal{F}$ to each of the subfamilies, we have $3(k+1)$ families without an SDR of size $n$, where each family consists of $n$ mutually disjoint axis-parallel boxes.
\end{remark}

\medskip

\section{Line segments in one direction}\label{sec:one direction}
In this section, we prove Theorem~\ref{thm:main2} when $t = 1$.
Also, as an intermediate step for the proof of Theorem~\ref{thm:main2}, we prove a lemma on mutually disjoint segments of simple curves.
Note that when all simple curves are mutually disjoint, we may consider segments of the simple curves as line segments in one direction by the following observation.
\begin{observation}\label{obs1}
Let $\beta = \{c_1,\ldots,c_k\}$ be a set of mutually disjoint simple curves in the plane, and let $\mathcal{F}$ be a finite family of segments of curves in $\beta$.
For each $A \in \mathcal{F}$, we define a horizontal line segment $A'$ as follows: if $A$ is a segment of $c_i$, then $A' = I_A \times \{i\} \subset \mathbb{R}^2$ where $I_A$ is the preimage of $A$ along the curve $c_i$.
Then two distinct segments $A$ and $B$ in $\mathcal{F}$ are disjoint if and only if $A'$ and $B'$ are disjoint.
\end{observation}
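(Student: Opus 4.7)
The plan is to prove the observation by a straightforward case analysis on whether the two segments lie on the same curve or on different curves, using only the injectivity of simple curves and the mutual disjointness hypothesis on $\beta$.

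First I would fix two distinct segments $A, B \in \mathcal{F}$ with $A$ a segment of $c_i$ and $B$ a segment of $c_j$, and split into two cases.

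In the case $i = j$, the corresponding horizontal segments $A' = I_A \times \{i\}$ and $B' = I_B \times \{i\}$ lie on the same horizontal line, so $A' \cap B' \neq \emptyset$ is equivalent to $I_A \cap I_B \neq \emptyset$. Since $c_i$ is injective (being a simple curve), its restriction to $I_A \cup I_B$ is a bijection onto $A \cup B$, and therefore $A \cap B = c_i(I_A) \cap c_i(I_B) = c_i(I_A \cap I_B)$. Hence $A \cap B \neq \emptyset$ if and only if $I_A \cap I_B \neq \emptyset$, giving the desired equivalence.

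In the case $i \neq j$, the hypothesis that the curves in $\beta$ are mutually disjoint gives $c_i \cap c_j = \emptyset$, so in particular $A \cap B = \emptyset$. On the other hand, $A' \subseteq \mathbb{R} \times \{i\}$ and $B' \subseteq \mathbb{R} \times \{j\}$ lie on distinct horizontal lines, so $A' \cap B' = \emptyset$ as well. Thus both sides of the biconditional are false, and the equivalence holds vacuously. Combining the two cases completes the proof; there is no real obstacle here, as the injectivity of each $c_i$ and the disjointness of distinct $c_i$, $c_j$ mirror exactly the two features (``same horizontal line with injective parametrization'' versus ``disjoint horizontal lines'') of the target picture.
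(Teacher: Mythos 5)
Your proof is correct; the paper states this as an Observation with no accompanying proof, and your two-case argument (same curve: injectivity gives $c_i(I_A)\cap c_i(I_B)=c_i(I_A\cap I_B)$; different curves: both pairs are automatically disjoint) is exactly the routine verification the authors intend. One small wording slip: for $i\neq j$ both sides of the biconditional as stated (``disjoint iff disjoint'') are \emph{true} rather than false --- you are clearly working with the equivalent ``intersect iff intersect'' form, so the logic is unaffected.
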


Given a family $\mathcal{F}$ of non-empty sets, a set $F \in \mathcal{F}$ is said to be {\em simplicial} if all members of $\mathcal{F}$ that meet $F$ have a point in common.
It is known that every family of $n$ sets where each set consists of $n$ disjoint line segments on a line has a complete SDR~\cite[Theorem 3.20]{ABKK19}.
This implies $f_{\mathcal{J}_U}(n) = n$ when $|U| = 1$.
The proof of this statement is based on the fact that every family of segments on a simple curve contains a simplicial member.
The case when $t = 1$ in Theorem~\ref{thm:main2} immediately follows by Observation~\ref{obs1}.
\begin{proposition}\label{lem:samesize}
Let $n$ be the positive integer and $\beta$ be a set of mutually disjoint simple curves in $\mathbb{R}^d$.
If $\mathcal{J}$ is the set of all segments of curves in $\beta$, then $f_\mathcal{J}(n) = n$.
\end{proposition}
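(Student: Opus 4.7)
The plan is to combine Observation~\ref{obs1} with a simplicial-elimination induction on $n$. The inequality $f_{\mathcal{J}}(n)\geq n$ is immediate from the definition, so I need only show that any $n$ families $\mathcal{F}_1,\ldots,\mathcal{F}_n$, each consisting of $n$ mutually disjoint members of $\mathcal{J}$, admit a complete SDR. After a small perturbation of endpoints I may assume every segment is closed and all endpoints are distinct. The argument behind Observation~\ref{obs1} uses only pairwise disjointness of the curves in $\beta$, so it extends verbatim to $\mathbb{R}^d$: enumerating $\beta$ as $\{c_i\}_i$ and mapping a segment $A$ of $c_i$ to $A'=I_A\times\{i\}\subset\mathbb{R}^2$ (where $I_A$ is the preimage along $c_i$) preserves disjointness. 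Hence it suffices to treat the special case in which every segment in $\bigcup_i\mathcal{F}_i$ is a closed horizontal segment lying on one of the parallel lines $\{y=i\}$.

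I then induct on $n$. The base case $n=1$ is trivial. For $n\geq 2$, let $S$ be a segment in $\bigcup_{i\in[n]}\mathcal{F}_i$ whose right endpoint $p=(x_0,y_0)$ minimizes $x_0$, and suppose $S\in\mathcal{F}_j$. Since segments lying on different horizontal lines are automatically disjoint, any segment that meets $S$ must lie on the line $\{y=y_0\}$, and its right endpoint has $x$-coordinate at least $x_0$, so it contains $p$. In particular $S$ is simplicial in $\bigcup_{i\in[n]}\mathcal{F}_i$; this is the one-dimensional ``simplicial member on a simple curve'' fact cited from \cite{ABKK19}, applied to the single line $\{y=y_0\}$ and then observed to persist after including the disjoint segments sitting on the other lines.

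To finish, I take $S$ as the representative of $\mathcal{F}_j$, delete $\mathcal{F}_j$, and from each remaining family $\mathcal{F}_k$ remove every segment meeting $S$. Every removed segment contains the point $p$, and the segments within $\mathcal{F}_k$ are mutually disjoint, so at most one segment is removed per family. The $n-1$ remaining families therefore each retain at least $n-1$ mutually disjoint segments, none of which meets $S$; the inductive hypothesis supplies a complete SDR of size $n-1$ for these families, and appending $S$ yields the desired SDR of size $n$. No step here is really an obstacle; the only point that needs a brief word is the extension of Observation~\ref{obs1} to $\mathbb{R}^d$, which is automatic from its proof.
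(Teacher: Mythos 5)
Your proposal is correct and follows essentially the same route as the paper: reduce via Observation~\ref{obs1} to horizontal segments on parallel lines and then run the simplicial-member (leftmost right endpoint) elimination, which is exactly the argument underlying the result of \cite[Theorem 3.20]{ABKK19} that the paper cites instead of reproving. The only difference is that you write out that greedy induction explicitly (and note the routine extension of Observation~\ref{obs1} to $\mathbb{R}^d$), which the paper leaves to the citation.
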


Next, we show a sufficient condition on a family of sets, each consisting of $m$ disjoint horizontal line segments, to have an SDR of size $n$ when $m < n$.
This is one of the key ingredients for the proof of Theorem~\ref{thm:main} and Theorem~\ref{thm:main2}.
\begin{theorem}\label{thm: few lines}
Let $m$ and $n$ be positive integers with $n>m$. Let $\mathcal{A}$ be a family of sets $A_1,\ldots, A_{n+m-1}$, each consisting of $m$ disjoint horizontal line segments in the plane. 
Let $L$ be the set of all horizontal lines that meet $\bigcup_{A \in \mathcal{A}}A$.
If $|L| \geq m(n-m)+1$, then $\mathcal{A}$ has an SDR of size $n$.
\end{theorem}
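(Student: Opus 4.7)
The plan is to reduce to Proposition~\ref{lem:samesize} via a bipartite matching argument. Build the bipartite graph $H$ whose vertex classes are $L$ and $[n+m-1]$, placing an edge between a line $\ell$ and an index $i$ whenever some segment of $A_i$ lies on $\ell$. If the maximum matching satisfies $\nu(H)\ge n$, any matching of that size selects $n$ families together with $n$ distinct lines, so taking the corresponding segment of each chosen family on its matched line gives $n$ horizontal segments on pairwise distinct horizontal lines; these are automatically pairwise disjoint and form an SDR of size~$n$.

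In the remaining case $\nu(H)\le n-1$, I would invoke K\"onig's theorem to obtain a minimum vertex cover $L_1\cup F_1$ of $H$ with $|L_1|+|F_1|\le n-1$, and abbreviate $a=|L_1|$, $f=|F_1|$. Because every edge incident to a line of $L\setminus L_1$ has its family endpoint in $F_1$, every family outside $F_1$ has all $m$ of its segments on lines of $L_1$. The $F_1$-families together carry at most $mf$ segments, hence $|L|-a\le mf$, and plugging $|L|\ge m(n-m)+1$ into this inequality forces $a\le m-1$ and $f\ge n-m$; in particular $q:=n-f\le m$, which is precisely the condition that will let us apply Proposition~\ref{lem:samesize}.

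I would then build the SDR in two pieces sitting on disjoint sets of lines. A K\"onig-style argument on the bipartite subgraph $H[F_1,L\setminus L_1]$ shows it has a matching of size $f$: any smaller cover of this subgraph, joined with $L_1$, would yield a cover of $H$ of size less than $\nu(H)$. This produces $f$ segments, one per family in $F_1$, on $f$ distinct lines of $L\setminus L_1$. Meanwhile the at least $m+a\ge q$ families outside $F_1$ each carry $m\ge q$ disjoint horizontal segments lying on the pairwise disjoint simple curves in $L_1$, so Proposition~\ref{lem:samesize}, applied to any $q$ such families each restricted to $q$ of its segments, yields an SDR of size $q$ contained in $L_1$. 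Since the two pieces use disjoint sets of lines and disjoint sets of family indices, their union is an SDR of size $f+q=n$. The main obstacle I expect is the quantitative calculation in the second paragraph: the threshold $m(n-m)+1$ is just tight enough to force $a\le m-1$ and $f\ge n-m$ simultaneously, and only through this double bound does the residual problem on $L_1$ fall into the regime $q\le m$ where Proposition~\ref{lem:samesize} can be invoked.
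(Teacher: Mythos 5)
Your proof is correct, and it takes a genuinely different route from the paper's. You pass to the bipartite incidence graph $H$ between the lines $L$ and the family indices and use K\"onig duality: a matching of size $n$ immediately gives an SDR (segments on distinct horizontal lines are disjoint), while a minimum cover $L_1\cup F_1$ with $a+f\le n-1$ yields both that every family outside $F_1$ lies entirely on the $a$ lines of $L_1$ and that $|L|\le a+mf$, so the threshold $m(n-m)+1$ forces $a\le m-1$, $f\ge n-m$, hence $q=n-f\le m$ (for $m=1$ this case is simply infeasible, which is harmless). The two pieces do fit: minimality of the cover guarantees $H[F_1,L\setminus L_1]$ has a matching saturating $F_1$, giving $f$ disjoint representatives on lines outside $L_1$, and since at least $m\ge q$ families avoid $F_1$, each carrying $m\ge q$ disjoint segments on the pairwise disjoint lines of $L_1$, Proposition~\ref{lem:samesize} with $n:=q$ supplies the remaining $q$ representatives on a disjoint set of lines and indices. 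The paper instead fixes an SDR $R$ that is maximum and, subject to that, meets the most lines, partitions $L$ into $L_0,L_1,L_2$, and runs local exchange arguments (Claims~\ref{lem:uniqueness} and~\ref{lem:freeset}) before the count $|L|\le m|R|-(m-1)|R_1|\le m(n-m)$; there Proposition~\ref{lem:samesize} is applied to the unrepresented families. Your version is more modular and makes the role of the threshold transparent as a cover-counting inequality; the paper's version avoids invoking matching duality, with the extremal choice of $R$ playing the role K\"onig's theorem plays for you.
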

\begin{proof}
Let $R$ be an SDR of $\mathcal{A}$ such that
\begin{enumerate}[(i)]
    \item $|R|$ is maximum, and
    \item subject to (i), the number of lines in $L$ meeting a line segment of $R$ is maximum.
\end{enumerate}
The statement is obviously true if $|R| \geq n$, so we may assume $|R| \leq n-1$.

Since $R$ is an SDR, there is an injection $c:R \to [n+m-1]$ such that $I \in A_{c(I)}$ for each $I \in R$.
For each $S \subset R$, let $c(S) = \{c(I): I \in S\}$.
We take a partition of $L$ into three parts $L_0$, $L_1$ and $L_2$ as follows:
\begin{itemize}
    \item $L_0$ is the set of all lines of $L$ that do not meet any of $R$.
    \item $L_1$ is the set of all lines $l \in L \setminus L_0$ such that for any $I \in R$ that meets $l$, every line segment in $A_{c(I)}$ lies on a line in $L \setminus L_0$.
    \item $L_2 = L \setminus (L_0 \cup L_1)$.
\end{itemize}
Note that $L_2$ is the set of lines $l \in L \setminus L_0$ such that for any $J \in R$ that meets $l$, there is $J' \in A_{c(J)}$ that lies on a line in $L_0$.
Note also that $L_1 \cup L_2$ is the set of all lines that contain at least one line segment of $R$.

\begin{claim}\label{lem:uniqueness}
Each line in $L_2$ contains exactly one line segment of $R$.
\end{claim}
\begin{proof}
Assume that there is a line $l \in L_2$ that contains two distinct line segments of $R$, say $I$ and $J$.
By the definition of $L_2$, there is $I' \in A_{c(I)}$ that meets some line $l_0 \in L_0$.
Then $R' = (R \cup \{I'\}) \setminus \{I\}$ is an SDR of $\mathcal{A}$ with $|R'| = |R|$ that meets all lines in $\{l_0\}\cup L_1 \cup L_2$.
This contradicts the assumption (ii) for the choice of $R$.
\end{proof}

\begin{claim}\label{lem:freeset}
Let $A_j$ be a set in $\mathcal{A}$ that is not represented by $R$.
Then every element of $A_j$ lies on a line in $L_1$.
\end{claim}
\begin{proof}
Suppose there is $I \in A_j$ that does not lie on a line in $L_1$.
If $I$ lies on a line in $L_0$, then $R \cup \{I\}$ is a larger SDR of $\mathcal{A}$, contradicting the maximality assumption on $R$.
So, we may assume that $I$ lies on a line $l \in L_2$.
Then by Claim~\ref{lem:uniqueness}, there is exactly one $J \in R$ that lies on $l$.
By the definition of $L_2$, there is $J' \in A_{c(J)}$ that lies on $l_0 \in L_0$.
Now $(R \cup \{I,J'\}) \setminus \{J\}$ is a larger SDR of $\mathcal{A}$.
It again contradicts the maximality assumption on $R$.
\end{proof}

Let $R_i$ be the set of line segments in $R$ that lies on some line in $L_i$ for $i=1,2$.
We claim that $|R_1| \geq m$.

Suppose $|R_1| <m$.
Since $|[n+m-1] \setminus c(R)| \geq m$, by Proposition~\ref{lem:samesize}, there exists an SDR $R_1'$ of $\mathcal{A} \setminus \{A_i: i \in c(R)\}$ with $|R_1'| = m$.
By Claim~\ref{lem:freeset}, each line segment of $R_1'$ lies on a line of $L_1$.
Then $R' = (R \setminus R_1) \cup R_1'$ is an SDR of $\mathcal{A}$ with $|R'| > |R|$, which contradicts  the maximality of $R$.
Therefore, we may assume $|R_1| \geq m$.

Now take $l \in L_0$.
Then there exists $i \in [n+m-1]$ such that $A_i$ contains some $I_l$ that lies on $l$.
By Claim~\ref{lem:freeset}, $A_i$ must be represented by a line segment $I_i \in R$.
By the definition of $L_1$, $I_i$ should lie on a line of $L_2$.
That is, every line segment that lies on a line of $L_0$ belongs to some $A_i$ that is represented by $R_2$.
This gives us an upper bound $|L_0| \leq (m-1)|R_2|$.
Since $R = R_1 \cup R_2$ and $|R_1| \geq m$, we have
\begin{equation*}
\begin{split}
    |L|&=|L_0|+|L_1|+|L_2|\\
    &\leq (m-1)|R_2| + |R_1| + |R_2|\\
    &=m|R|-(m-1)|R_1| \\
    &\leq m(n-1)-m(m-1)=m(n-m),
\end{split}
\end{equation*}
which is a contradiction to the assumption $|L| \geq m(n-m) + 1$.
Therefore, it must be $|R| \geq n$, as required.
\end{proof}

The bound $|L| \geq m(n-m) + 1$ is tight by the following example.
\begin{example}
Let $X = \{I_1, \ldots, I_{m(n-m)}\}$ be a set of horizontal line segments in the plane, where each lies on a distinct horizontal line.
Consider a partition of $X = X_1 \cup \cdots \cup X_{n-m}$ into $n-m$ parts, where each part consists of $m$ line segments of $X$.
For each positive integer $i$, let \[A_i = \begin{cases}X_i & \text{ if }i \in [n-m-1]\\ X_{n-m} & \text{ if }i \geq n-m\end{cases}.\]
Let $\mathcal{A}$ be the family of all $A_i$'s.
Clearly, every SDR of $\mathcal{A}$ contains at most one element from $X_j$ for each $j \in [n-m-1]$ and at most $m$ elements from $X_{n-m}$.
Therefore, $\mathcal{A}$ is an infinite family that does not have an SDR of size $n$. 
\end{example}

\medskip

\section{Proof of Theorem~\ref{thm:main2}}\label{sec: few directions}
In this section, we discuss more general situation, giving the proof of the main theorems. 
In order to prove Theorem~\ref{thm:main2}, we need the following lemma which describes a sufficient condition for the existence of an SDR for families of sets of segments in simple curves.

For two distinct points $x$ and $y$ on a simple curve $c: I \to \mathbb{R}^2$ where $x = c(t_1)$ and $y=c(t_2)$ with $t_1 < t_2$, we denote by $(x, y)_c$ the image $c((t_1,t_2))$.

\begin{lemma}\label{lem:curves}
For each $i \in [m]$, let $c_i: [0,1]\to \mathbb{R}^2$ be a simple curve such that the set $V$ of all intersections of $c_i$'s is finite and every point of $V$ is not an endpoint of any of the curves.
Let $\mathcal{J}$ be the set of all segments of the form $c_i([x,y])$, and let $V = \{v_1,\ldots,v_p\}$. 
If each $v_i$ is contained in exactly $q_i$ curves, then $f_\mathcal{J}(n) \leq \left(\prod_{j \in [p]}q_j\right)n$.
\end{lemma}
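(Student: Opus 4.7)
The plan is to execute the pigeonhole-plus-reduction strategy sketched in the introduction, reducing the situation to the ``mutually disjoint curves'' case handled by Proposition~\ref{lem:samesize}. Given families $\mathcal{F}_1, \ldots, \mathcal{F}_N$ with $N = \left(\prod_{j\in[p]}q_j\right)n$, each consisting of $n$ mutually disjoint segments in $\mathcal{J}$, I would associate to every family $\mathcal{F}_i$ a \emph{type vector} $Q_i \in \mathbb{Z}^p$. For the $j$-th coordinate: if some segment of $\mathcal{F}_i$ contains $v_j$ (this segment is unique, since segments in $\mathcal{F}_i$ are disjoint) and lies on $c_z$, set $Q_i[j] = z$; otherwise pick any $z$ with $v_j \in c_z$ and set $Q_i[j] = z$. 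Since there are at most $\prod_j q_j$ possible vectors, the pigeonhole principle yields $n$ families, which we relabel $\mathcal{F}_1, \ldots, \mathcal{F}_n$, sharing a common vector $(z_1, \ldots, z_p)$.

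Next I would ``resolve'' each intersection point $v_j$ by keeping the curve $c_{z_j}$ intact and removing the single point $v_j$ from every other curve $c_i$ (with $i \neq z_j$) that passes through $v_j$. Each $c_i$ is thereby split into finitely many simple subcurves, parametrized by subintervals of $[0,1]$. In the resulting collection, only $c_{z_j}$ meets the point $v_j$; since $V$ contained all pairwise intersections of the $c_i$'s, the new collection is a set $\beta'$ of mutually disjoint simple curves.

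The key verification is that every segment $A$ appearing in $\bigcup_{i\in[n]} \mathcal{F}_i$ survives as a segment of some curve in $\beta'$. Indeed, if $A \in \mathcal{F}_i$ lies on $c_z$ and contains some intersection point $v_j$, then by the definition of the type vector we have $z_j = z$, so $c_z$ was not cut at $v_j$; consequently every cut performed on $c_z$ lies outside the parameter range of $A$, and $A$ remains a subsegment of a single piece of $\beta'$. Thus letting $\mathcal{J}'$ be the set of all segments of curves in $\beta'$, each $\mathcal{F}_i$ ($i\in[n]$) is a family of $n$ disjoint segments in $\mathcal{J}'$.

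Finally, Proposition~\ref{lem:samesize} applied to $\beta'$ gives $f_{\mathcal{J}'}(n) = n$, so the $n$ families $\mathcal{F}_1, \ldots, \mathcal{F}_n$ admit an SDR of size $n$, which is also an SDR of size $n$ in the original collection. The main delicacy of the argument is the cutting step: one must check that no segment is accidentally severed by a cut and that after cutting the curves are genuinely disjoint. Both points reduce to the observation that whenever a segment actually contains a point $v_j$, the type vector is forced to record the index of the curve on which that segment lies, which is precisely why declaring the $j$-th coordinate for families \emph{not} meeting $v_j$ to be an arbitrary choice does no harm.
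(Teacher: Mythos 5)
Your proposal is correct and follows essentially the same route as the paper: a pigeonhole argument on type vectors recording which curve's segment covers each intersection point, followed by a reduction to a family of mutually disjoint simple curves and an application of Proposition~\ref{lem:samesize}. The only difference is cosmetic: the paper isolates open arcs $(u_k,w_k)_{c_{n_k}}$ around the intersection points, whereas you delete each $v_j$ from the non-designated curves to form $\beta'$, which is an equally valid (and arguably more carefully justified) way to carry out the same reduction.
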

\begin{proof}
Let $N = \left(\prod_{j \in [p]}q_j\right)n$ and consider a family $\mathcal{A} = \{A_1,\ldots,A_N\}$, where each $A_j$ consists of $n$ mutually disjoint sets from $\mathcal{J}$.
For each $j\in [N]$, we assign $x_j := (a_1,\ldots,a_p) \in [m]^p$ so that for each $i\in [p]$,  \begin{itemize}
    \item if $v_i$ is contained in $A_j$, then 
    the segment of $A_j$ containing $v_i$ is a segment of $c_{a_i}$, and
    \item otherwise $a_i=q$ for some curve $c_q$ containing $v_i$.
\end{itemize}
By the pigeonhole principle,
there exist a subset $\mathcal{A}'$ of $\mathcal{A}$ and $(n_1, \ldots, n_p) \in [m]^p$ such that 
\begin{itemize}
    \item $|\mathcal{A}'|\ge n$, and
    \item for all $A_j\in \mathcal{A}'$, $x_j=(n_1, \ldots, n_p)$.
\end{itemize}

Let $V' = V \cup \{c_i(t): i \in [m], t \in \{0,1\}\}$.
For each $k \in [p]$, let $u_k$ and $w_k$ be the points of $V'$ on the curve $c_{n_k}$ such that $u_k, v_k, w_k$ appear consecutively on the curve $c_{n_k}$. By definition, $V' \cap (u_k, w_k)_{c_{n_k}} = \{v_k\}$.

Observe that if $(u_k, w_k)_{c_{n_k}} \cap (u_{k'}, w_{k'})_{c_{n_{k'}}} \neq \emptyset$ then $n_k = n_{k'}$.
Thus we obtain that $\bigcup_{k \in [p]} (u_k, w_k)_{c_{n_k}}$ is a set of mutually disjoint simple curves and every segment of a curve in $\mathcal{A}'$ is contained in $\bigcup_{k \in [p]} (u_k, w_k)_{c_{n_k}}$.
By Proposition~\ref{lem:samesize}, there is an SDR of size $n$ for $\mathcal{A}' \subset \mathcal{A}$.
\end{proof}

Now we are ready to prove Theorem~\ref{thm:main2}.
\begin{proof}[Proof of Theorem~\ref{thm:main2}]
Let \[M = M(n,k,t) = \binom{n + k - 1}{k - 1} n k^{\frac{t(k-1)^3}{2k^3}n^4}\] and $\mathcal{A}$ be a family of $M$ sets, where each set in $\mathcal{A}$ consists of $n$ disjoint segments in $\mathcal{J}$.
We will show that $\mathcal{A}$ has an SDR of size $n$.
We may assume that all simple curves are closed, i.e. they are of the form $c: I \to \mathbb{R}^2$ for some closed interval $I$, because $\mathcal{A}$ is finite.
We may also assume that all segments are closed.

Consider a partition 
\[\mathcal{A} = \bigcup_{\substack{n_1 + \cdots + n_k = n, \\ \forall j \in [k],~n_j \in \mathbb{Z}_{\geq 0}}}\mathcal{A}_{n_1,\ldots,n_k}\]
of $\mathcal{A}$ into $\binom{n + k - 1}{k - 1}$ parts, where $\mathcal{A}_{n_1,\ldots,n_k}$ is the family of all sets in $\mathcal{A}$ that contain exactly $n_i$ segments of curves in $\beta_i$ for each $i \in [k]$.
By the pigeonhole principle, at least one of the parts, say $\mathcal{A}_{n_1,\ldots,n_k}$, should have cardinality at least $\frac{M}{\binom{n + k - 1}{k - 1}} = n k^{\frac{t(k-1)^3}{2k^3}n^4}$.

Suppose that for some $i\in [k]$, there are at least $n_i(n-n_i)+1$ distinct curves in $\beta_i$ such that each containing some segment in $\bigcup_{A \in \mathcal{A}_{n_1,\ldots,n_k}} A$.
In this case, by Observation~\ref{obs1} and Theorem~\ref{thm: few lines}, we can find an SDR of size $n$.
Therefore, we may assume that for all $i\in [k]$, there are at most $n_i(n-n_i)$ curves in $\beta_i$ containing any line segment in $\bigcup_{A \in \mathcal{A}_{n_1,\ldots,n_k}} A$. 
Let $V$ be the set of all intersections of those curves, then we have 
\[|V| \leq \sum_{i < j} tn_i(n-n_i)n_j(n-n_j),\] 
and each $v \in V$ is contained in at most $k$ distinct curves.
Note that, since $2n_i(n-n_i)n_j(n-n_j) \leq \left(n_i(n-n_i)\right)^2 + \left(n_j(n-n_j)\right)^2$, we obtain \[\sum_{i < j} n_i(n-n_i)n_j(n-n_j) \leq \frac{k-1}{2} \sum_{i\in[k]} \left(n_i(n-n_i)\right)^2 \leq \frac{(k-1)^3}{2k^3}n^4,\] where the last inequality follows from the Jensen's inequality.
Now, an immediate application of Lemma~\ref{lem:curves} shows the existence of an SDR $R$ of size $n$ for $\mathcal{A}_{n_1,\ldots,n_k} \subset \mathcal{A}$, as required.
\end{proof}

\begin{remark}\label{rem:unbounded}
In Theorem~\ref{thm:main2}, the condition about the bound on the number of crossings between two curves is important, in the sense that, if the number of crossings is not bounded, one can construct an arbitrarily large family of sets, each consisting of $n$ disjoint segments of simple curves, with no SDR of size $n$.
In the below, we show by an explicit example that it can happen even when the number of crossings is countably infinite.
Namely, we will construct a family of size $(n-1)q$ without an SDR of size $n$ when we allow the number of crossings to be at least $2q$ for some positive integer $q > 1$.

For $X \subset \mathbb{R}$, $y \in \mathbb{R}$, and a family $\mathcal{F}$ of sets in $\mathbb{R}$, let $X + y := (x+y: x \in X)$ and $\mathcal{F} + y := (A + y: A \in \mathcal{F})$.
Take a positive integer $n > 1$ and let $\epsilon = \frac{1}{2q+2}$.
Let $I = [\epsilon,1-\epsilon]$ and for each $i \in \mathbb{Z}$, \[A_i = \{I + (x + (2i+1)\epsilon): x \in \{0,\ldots,n-2\}\} \times \{0\} \subset \mathbb{R}^2.\]
Observe that the intersection graph of the family $\bigcup_{i \in [0,q-1]_\mathbb{Z}} A_i$ is the $(q-1)$-th power of the path on $(n-1)q$ vertices.

For each $i \in [q-1]$, let $u_i$ and $v_i$ be point on the $x$-axis whose $x$-coordinates are $2i\epsilon$ and $n-1+2(i-1)\epsilon$, respectively.
For each positive integer $i \in [2q-3]$, we define $W_i$ as follows:
\begin{itemize}
    \item For $i \in [q-2]$, $W_i$ is the closed upper half circle having the segment connecting $v_i$ and $v_{i+1}$ as its diameter.
    \item $W_{q-1}$ is the closed upper half circle having the segment connecting $u_1$ and $v_{q-1}$ as its diameter.
    \item For $i \in [q,2q-3]_\mathbb{Z}$, $W_i$ is the the closed upper half circle having the segment connecting $u_{i - q+1}$ and $u_{i-q+2}$ as its diameter.
\end{itemize}
Now for each $i \in \{0,1,\ldots,q-1\}$, let $J_{i} = \bigcup_{j \in [i+1, i+q-2]_\mathbb{Z}}W_j$.
Finally let $B_i = A_i \cup \{J_i\}$.
See Figure~\ref{fig:infinite} for an illustration of the case $n = 5$ and $q = 4$.

\begin{figure}[htbp]
\begin{center}
\begin{tikzpicture}[scale=2]
\draw[ultra thick] (0+0+1/10,0)--(0+0+9/10,0); \draw[ultra thick] (1+0+1/10,0)--(1+0+9/10,0); \draw[ultra thick] (2+0+1/10,0)--(2+0+9/10,0); \draw[ultra thick] (3+0+1/10,0)--(3+0+9/10,0);
\draw[ultra thick] (4+2/10, 0) arc[start angle=0, end angle=180, radius = 1/10];
\draw[ultra thick] (4+4/10, 0) arc[start angle=0, end angle=180, radius = 1/10];

\draw[ultra thick, opacity=.1] (0+2/10+1/10,-0.03)--(0+2/10+9/10,-0.03); \draw[ultra thick, opacity=.1] (1+2/10+1/10,-0.03)--(1+2/10+9/10,-0.03); \draw[ultra thick, opacity=.1] (2+2/10+1/10,-0.03)--(2+2/10+9/10,-0.03); \draw[ultra thick, opacity=.1] (3+2/10+1/10,-0.03)--(3+2/10+9/10,-0.03); 
\draw[ultra thick, opacity=.1] (4+4/10-0.03, 0) arc[start angle=0, end angle=180, radius = 1/10-0.03];
\draw[ultra thick, opacity=.1] (4+4/10-0.03,0) arc[start angle=0, end angle=180, radius=2+1/10-0.03];

\draw[ultra thick, opacity=.5] (0+4/10+1/10,-0.06)--(0+4/10+9/10,-0.06); \draw[ultra thick, opacity=.5] (1+4/10+1/10,-0.06)--(1+4/10+9/10,-0.06); \draw[ultra thick, opacity=.5] (2+4/10+1/10,-0.06)--(2+4/10+9/10,-0.06); \draw[ultra thick, opacity=.5] (3+4/10+1/10,-0.06)--(3+4/10+9/10,-0.06); 
\draw[ultra thick, opacity=.5] (4+4/10,0) arc[start angle=0, end angle=180, radius=2+1/10];
\draw[ultra thick, opacity=.5] (4/10-0.03, 0) arc[start angle=0, end angle=180, radius = 1/10-0.03];

\draw[ultra thick, opacity=.25] (0+6/10+1/10,-0.09)--(0+6/10+9/10,-0.09); \draw[ultra thick, opacity=.25] (1+6/10+1/10,-0.09)--(1+6/10+9/10,-0.09); \draw[ultra thick, opacity=.25] (2+6/10+1/10,-0.09)--(2+6/10+9/10,-0.09); \draw[ultra thick, opacity=.25] (3+6/10+1/10,-0.09)--(3+6/10+9/10,-0.09); 
\draw[ultra thick, opacity=.25] (4/10, 0) arc[start angle=0, end angle=180, radius = 1/10];
\draw[ultra thick, opacity=.25] (6/10, 0) arc[start angle=0, end angle=180, radius = 1/10];

\draw[ultra thick] (5,1.5)--(5.5,1.5); \node at (5.75, 1.5){$:B_1$};
\draw[ultra thick, opacity=.1] (5,1)--(5.5,1); \node at (5.75, 1){$:B_2$};
\draw[ultra thick, opacity=.5] (5,0.5)--(5.5,0.5); \node at (5.75, 0.5){$:B_3$};
\draw[ultra thick, opacity=.25] (5,0)--(5.5,0); \node at (5.75, 0){$:B_4$};

\draw[dotted] (1/5,0)--(1/5,-0.25);\draw[dotted] (2/5,0)--(2/5,-0.25);\draw[dotted] (3/5,0)--(3/5,-0.25);
\draw[dotted] (4,0)--(4,-0.25);\draw[dotted] (4+1/5,0)--(4+1/5,-0.25);\draw[dotted] (4+2/5,0)--(4+2/5,-0.25);
\node at (1/5,-0.3){\small $u_1$};\node at (2/5,-0.3){\small $u_2$};\node at (3/5,-0.3){\small $u_3$};
\node at (4,-0.3){\small $v_1$};\node at (4+1/5,-0.3){\small $v_2$};\node at (4+2/5,-0.3){\small $v_3$};
\end{tikzpicture}
\caption{An illustration when $n=5$ and $q=4$.}\label{fig:infinite}
\end{center}
\end{figure}
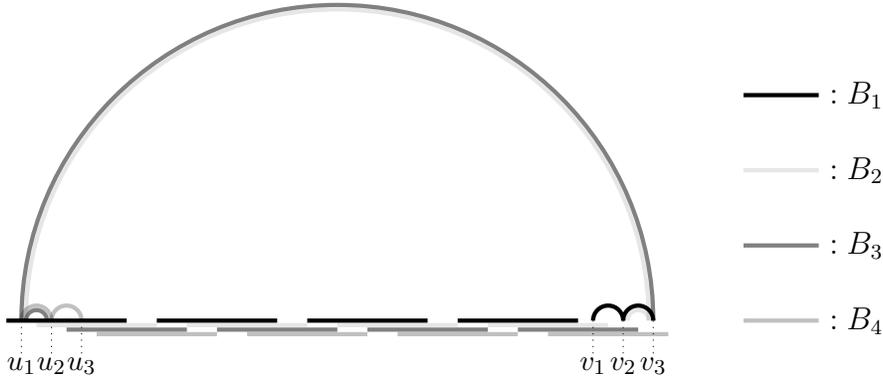

Then we observe that the intersection graph of the family $\bigcup_{i \in [0,q-1]_\mathbb{Z}} B_i$ is the $(q-1)$-th power of the cycle on $nq$ vertices, where the set of independent sets of size $n$ equals to the set of $B_i$'s.
Thus, as in \cite[Theorem 5.5]{ABKK19}, the family of size $(n-1)q$ consists of $n-1$ copies of each $B_i$ does not have an SDR of size $n$.
\end{remark}

The current upper bound of $M(n,k,t)$ in Theorem~\ref{thm:main2} is exponential in $n$, when $k$ and $t$ are fixed. We ask whether this bound can be reduced to a polynomial in $n$.

\begin{question}\label{que:polybound}
For all fixed integers $k$ and $t$,
does there exist a polynomial function $M(n)$ satisfying the statement of Theorem~\ref{thm:main2}?
\end{question}

\medskip

\section{Axis-parallel line segments}\label{sec: axis-parallel}
The proof argument in the previous section shows that Theorem~\ref{thm:main} is true for $N(n,k) = \binom{n + k - 1}{k - 1} n k^{\frac{(k-1)^3}{2k^3}n^4}$.
However, our guess is that $f_{\mathcal{J}_U}(n)$ can be bounded by a polynomial about $n$ when $|U|$ is fixed.
In this section, we give additional results about $f_{\mathcal{J}_U}(n)$ when $|U| = 2$.
We suggest a possible direction to obtain a polynomial upper bound for $f_{\mathcal{J}_U}(n)$, and we construct an example that gives a quadratic lower bound on $f_{\mathcal{J}_U}(n)$.
By rotating one direction if needed, we may assume all line segments are {\em axis-parallel}, that is, each line segment is either horizontal or vertical.

Consider a family of sets of $n$ disjoint axis-parallel line segments such that each set has the same number of horizontal lines.
If one can prove that there exists a constant $C$ and $d$ such that every such family with $Cn^d$ sets has an SDR of size $n$, then we can show that $f_{\mathcal{J}_U}(n) \leq Cn^d(n+1)$ when $|U| = 2$: given a family of size $Cn^{d+1}$, we apply the pigeonhole principle to find a subfamily of size $C n^d$ where each set of the subfamily has the same number of horizontal lines.
Here, we give the first step toward this direction.
\begin{theorem}\label{thm:n-1,1}
Let $\mathcal{A}$ be a family of $2n-1$ sets, each consisting of $n-1$ horizontal line segments and one vertical line segment in the plane. Then there is an SDR of size $n$ for $\mathcal{A}$.
\end{theorem}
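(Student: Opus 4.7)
The plan proceeds by case analysis on the number $\ell$ of horizontal lines containing at least one horizontal segment of $\bigcup_{A\in\mathcal{A}} A$.

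First, I will address the case $\ell \geq n$ by invoking Theorem~\ref{thm: few lines}. The theorem requires exactly $n + m - 1 = 2n - 2$ families, so I will drop one set of $\mathcal{A}$ while preserving the condition $|L| \geq m(n-m)+1 = n$ (with $m = n - 1$). This is possible by a counting argument: writing $d_i$ for the number of horizontal lines used uniquely by $A_i$, we have $\sum_i d_i \leq \ell$, so either some $d_i = 0$ (in which case dropping $A_i$ preserves the count $\ell \geq n$), or $\ell \geq 2n - 1$ (in which case any single set can be dropped because $\ell - (n-1) \geq n$). Theorem~\ref{thm: few lines} then yields an SDR of size $n$ consisting entirely of horizontal segments.

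Next, I will reduce the case $\ell \leq n - 1$. An entirely analogous argument with the roles of the two coordinate axes swapped shows that I may assume the vertical segments $V_1, \ldots, V_{2n-1}$ lie on at most $n - 1$ distinct vertical lines; otherwise, choosing one $V_i$ per vertical line from $n$ distinct vertical lines gives an SDR of size $n$ directly (since verticals on distinct lines are automatically pairwise disjoint and come from distinct sets). Further, if some $V_i$ crosses none of the $y_1, \ldots, y_\ell$, then $V_i$ is disjoint from every horizontal segment in $\mathcal{A}$, and applying Proposition~\ref{lem:samesize} to any $n - 1$ of the remaining $2n - 2$ sets $A_j$ ($j \neq i$), viewed as $n-1$ mutually disjoint segments on the mutually disjoint simple curves $y_1, \ldots, y_\ell$, yields an SDR of size $n-1$; combined with $V_i$ this gives an SDR of size $n$. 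So I may further assume that every $V_i$ crosses at least one $y_r$.

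Under all these assumptions, I will assign to each $V_i$ a projection $p_i := (x_i, y_{r(i)})$, where $y_{r(i)}$ is some horizontal line crossed by $V_i$ and $x_i$ is the $x$-coordinate of $V_i$. Since $V_i$ is disjoint from the horizontal segments of $A_i$, the augmented set $A_i^\star := A_i \cup \{p_i\}$ consists of $n$ mutually disjoint (possibly degenerate) segments on the mutually disjoint curves $y_1, \ldots, y_\ell$. Proposition~\ref{lem:samesize} then produces an SDR of size $n$ in $\{A_1^\star, \ldots, A_{2n-1}^\star\}$, which I will re-interpret in the original setting by replacing each selected proxy $p_i$ by the corresponding $V_i$. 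The main obstacle is to choose the projection map $i \mapsto r(i)$ so that curve-system disjointness faithfully reflects $2$-D disjointness; the only problematic configuration is when two selected proxies $p_i, p_j$ lie on different horizontal lines while $V_i, V_j$ share a vertical line with overlapping $y$-ranges. I plan to handle this by grouping verticals on each vertical line into equivalence classes of mutual overlap and projecting each class to a common horizontal line lying in its common intersection (which exists by Helly's theorem on $\mathbb{R}$ provided some $y_r$ lies in the overlap). The residual case, in which an overlap group's common intersection is confined to a gap between consecutive horizontal lines, is expected to be the core technical difficulty, to be resolved by exploiting the remaining structural constraints that $|\mathcal{V}| \leq n - 1$ and that every $V_i$ crosses some $y_r$.
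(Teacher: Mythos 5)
Your first case ($\ell \geq n$ horizontal lines) is correct and is a nice use of Theorem~\ref{thm: few lines}: the counting argument for dropping one family while keeping $|L| \geq n$ works, and with $m = n-1$ the theorem indeed yields an all-horizontal SDR of size $n$. The two easy reductions (verticals on $\geq n$ distinct vertical lines; a vertical missing every horizontal line in use) are also fine. But the remaining case, which is the heart of the theorem, is not proved, and the proxy construction it rests on is unsound as described. When you replace a vertical $V_i$ by a single point $p_i = (x_i, y_{r(i)})$ and apply Proposition~\ref{lem:samesize} to the augmented families, the SDR you obtain only certifies that the \emph{point} $p_i$ is disjoint from the other selected objects. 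Your claim that ``the only problematic configuration'' is two selected proxies whose verticals share a vertical line is false: the more basic conflict is between a selected proxy $p_i$ and a selected \emph{horizontal} segment $H \in A_j$ lying on a horizontal line $y_s \neq y_{r(i)}$. The curve-system SDR sees no intersection, yet $V_i$ may well cross $H$ at $(x_i, y_s)$. No choice of the projection map $i \mapsto r(i)$ can encode which horizontal segments of other families a vertical crosses on the lines it was not projected to, so the substitution step can output a non-disjoint family. In addition, even the vertical--vertical issue you do flag is only sketched (Helly grouping), and you explicitly leave a ``residual case'' unresolved; the constraints $|\mathcal{V}| \leq n-1$ and ``every $V_i$ crosses some $y_r$'' are not shown to close it. So the proposal has a genuine gap precisely in the regime where the theorem is hard: few horizontal lines and verticals interacting with horizontals on several lines.

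For comparison, the paper's proof does not split into cases on the number of lines at all. It runs a greedy sweep (always taking the segment whose right endpoint is leftmost) to build a maximal SDR $R_1$ of the horizontal parts of all $2n-1$ families, and a mirrored sweep building $R_2$ from the last $n$ families; if both have size only $n-1$, the two sweeps force, for each $j$, an overlap segment $X_j = I_j \cap J_{g(j)}$, and every horizontal segment of the one unrepresented family must cover these overlaps, so its vertical segment $I$ misses all $X_j$'s. One then picks, for each $j$, whichever of $I_j$ or $J_{g(j)}$ lies on the far side of $I$, and adds $I$ to complete an SDR of size $n$. If you want to salvage your approach, the missing ingredient is exactly this kind of exchange mechanism that controls how a chosen vertical interacts with horizontal segments on \emph{all} lines it spans, not just the line of its proxy.
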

\begin{proof}
Let $\mathcal{A} = (A_1,\ldots,A_{2n-1})$.
For each $A_i \in \mathcal{A}$, let $A_i'$ be the set of all horizontal line segments of $A_i$, and let $\mathcal{A}' = (A_i':A_i \in \mathcal{A})$.

We first construct a maximal SDR $R_1$ for $\mathcal{A}'$ by the following process.
Let $\mathcal{A}_0 = \mathcal{A}'$, $R_{1,0} = \emptyset$, and $A_{j,0} = A_j'$ for each $A_j' \in \mathcal{A}_0$.
In $i$-th step, we proceed the following:
\begin{enumerate}
    \item Take the line segment in $\bigcup \mathcal{A}_{i-1}$ such that the rightmost point of it is leftmost, say $I_i \in A_{t_i, i-1}$. Let $R_{1,i} = R_{1, i-1} \cup \{{I_i}\}$.
    \item For each $A_{j, i-1} \in \mathcal{A}_{i-1}$, let $A_{j, i}$ be the set obtained from $A_{j, i-1}$ by deleting all line segments in $A_{j, i-1}$ that meet $I_i$.
    \item Let $\mathcal{A}_i = (A_{j, i}: A_{j, i-1} \in \mathcal{A}_{i-1} \setminus \{A_{t_i, i-1}\})$.
\end{enumerate}
Let $R_1 = R_{1, k}$ where $k$ is the minimum integer such that $A_{j, k} = \emptyset$ for all $A_{j, k} \in \mathcal{A}_k$.
Note that $k \geq n-1$ since for each $i$, $0 \leq |A_{j, i-1}| - |A_{j, i}| \leq 1$.
Without loss of generality, assume $t_i = i$ for $1 \leq i \leq k$.
Let $r_i$ be the rightmost point of $I_i$.
If $k \geq n$, then $R_1$ contains an SDR of size $n$ for $\mathcal{A}$.
Otherwise, we have $k = n-1$, and this implies that for each $i \in [n-1]$ and $j \in [n,2n-1]_\mathbb{Z}$, $A_j$ lost exactly one element that contains $r_i$, and does not contain $r_{i'}$ for any $i' > i$, in the $i$-th step.

Now, we proceed the same process for $\mathcal{B}_0 = (A_i': i \in [n,2n-1]_\mathbb{Z})$, $R_{2,0} = \emptyset$, and $B_{j,0} = A_j'$ with the following modifications:
\begin{enumerate}
    \item[(1')] Take the line segment in $\bigcup{\mathcal{B}_{i-1}}$ such that the leftmost point of it is rightmost, say $J_i \in B_{t_i, i-1}$. Let $R_{2,i} = R_{2, i-1} \cup \{{J_i}\}$.
    \item[(2')] For each $B_{j,i-1} \in \mathcal{B}_{i-1}$, let $B_{j,i}$ be the set obtained from $B_{j,i-1}$ by deleting all line segments in $B_{j,i-1}$ that meets $J_i$.
    \item[(3')] Let $\mathcal{B}_i=(B_{j,i}:B_{j,i-1} \in \mathcal{B}_{i-1} \setminus \{B_{t_i,i-1}\})$.
\end{enumerate}
Similarly as above, we may assume that the obtained SDR $R_2$ has size $n-1$.
Without loss of generality, we may assume $R_2 = \{J_1,\ldots,J_{n-1}\}$ where $J_j \in A_{n-1+j}'$ for each $j \in [n-1]$.
For each $J_j \in R_2$, let $l_j$ be the left endpoint of $J_j$.
Then, by the choice of $R_1$ and $R_2$, there is an injection $g: [n-1] \to [n-1]$ such that $X_j := I_j \cap J_{g(j)}$ is the line segment connecting the endpoints $l_{g(j)}$ and $r_j$ for each $j \in [n-1]$.

Similarly as above, if we can find $I \in A_{2n-1}$ that does not meet any line segment of $R_2$, then $R_2 \cup \{I\}$ is an SDR of size $n$ for $\mathcal{A}$.
Then the union of the horizontal line segments of $A_{2n-1}$ covers all of the $X_j$'s.
Let $I$ be the vertical line segment of $A_{2n-1}$ and $B = \mathbb{R} \times I$.
Note that $I$ does not meet any of $X_j$'s.
We will construct an SDR $R$ of size $n-1$ for $\mathcal{A}'$ such that $R \subset R_1 \cup R_2$ and each line segment of $R$ does not meet $I$.
For each $j \in [n-1]$, we add either $I_j$ or $J_{g(j)}$ to $R$ as follows:
\begin{itemize}
    \item For every $X_j$ that is disjoint from $B$, we choose $I_j$.
    \item If $X_j \subset B$ and $X_j$ is on the left side of $I$, then we choose $I_j$.
    \item Otherwise, $X_j \subset B$ and $X_j$ is on the right side of $I$. In this case, we choose $J_{g(j)}$.
\end{itemize}
Now $R \cup \{I\}$ is an SDR of size $n$ for $\mathcal{A}$.
\end{proof}

In Theorem~\ref{thm:n-1,1}, $2n-1$ sets are necessary, that is, we cannot guarantee the existence of an SDR of size $n$ with $2n-2$ sets.
This can be shown by the following example.
\begin{example}\label{ex:n-1,1}
In Figure~\ref{fig:n-1,1}, $X$ is a set of black line segments and $Y$ is a set of gray line segments where each of them consists of $n-1$ horizontal line segments and one vertical line segment.
Let \[A_1 = A_2 = \cdots = A_{n-1} = X \;\text{and}\; A_n = A_{n+1} = \cdots = A_{2n-2} = Y,\] and $\mathcal{A}$ be the family of $A_i$'s.
\begin{figure}[htbp]
\begin{center}
\begin{tikzpicture}[scale=1.1]
\draw[ultra thick] (1,4)--(5.5,4); \draw[ultra thick] (1,3)--(5.5,3); \draw[ultra thick] (1,1)--(5.5,1); \draw[ultra thick] (1,0)--(5.5,0); \draw[ultra thick] (0,-0.5)--(0,4.5);

\draw[opacity = .2, ultra thick] (-0.5,3.95)--(4,3.95);
\draw[opacity = .2, ultra thick] (-0.5,2.95)--(4,2.95);
\draw[opacity = .2, ultra thick] (-0.5,0.95)--(4,0.95); 
\draw[opacity = .2, ultra thick] (-0.5,-0.05)--(4,-0.05); 
\draw[opacity = .2, ultra thick] (5,-0.5)--(5,4.5);
\node at (2.5,2){\LARGE $\vdots$};

\draw[ultra thick] (6.5,2.5)--(7.5,2.5); \node at (8, 2.5){$:X$};
\draw[ultra thick, opacity = .2] (6.5,1.5)--(7.5,1.5); \node at (8, 1.5){$:Y$};
\end{tikzpicture}
\caption{The family of $n-1$ copies of $X$ and $n-1$ copies of $Y$ does not have an SDR of size $n$.}\label{fig:n-1,1}
\end{center}
\end{figure}
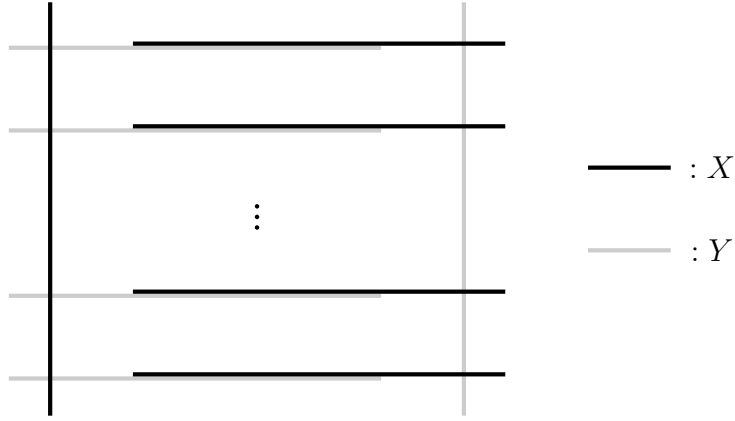

Observe that if an SDR for $\mathcal{A}$ contains the black vertical line segment, then it cannot contain any of the gray horizontal line segments.
Let $R$ be a maximal SDR for $\mathcal{A}$.
If $R$ contains both black and gray vertical line segments, then it cannot contain any of the horizontal line segments.
If $R$ does not contain any vertical line segment, then it consists of at most $n-1$ horizontal line segments.
If $R$ contains the black vertical line segment and does not contain the gray vertical line segment, then it cannot contain any of the gray line segments, thus $R \subset X$.
Since there are only $n-1$ copies of $X$, $R$ can have at most $n-1$ elements.
In any case, we have $|R| < n$, i.e. $\mathcal{A}$ does not have an SDR of size $n$.\qed
\end{example}

Example~\ref{ex:n-1,1} can be generalized to give a construction for a quadratic lower bound on $f_{\mathcal{J}_U}(n)$ when $|U| = 2$.
\begin{example}\label{ex:n-m,m}
Let $n,m$ be positive integers with $2m-2 <n$.
For each $i \in \mathbb{Z}$, let $I_i:= \{i\} \times [1,n-m]$ for $i \in \mathbb{Z}$ and $J_{ij} :=[-m+i+1,i-1]\times\{j\}$ for $j \in \mathbb{Z}$.
Now, for each $i \in [m-1]$, we define a set $X_i$ of disjoint line segments, which consists of $m$ vertical line segments and $n-m$ horizontal line segments as follows:
\[X_i:=\{I_j:j \in [-m+1, -m+i]_\mathbb{Z} \cup [i,m-1]_\mathbb{Z}\} \cup \{J_{ik} : k \in [n-m]\}.\]
See Figure~\ref{fig:n-m,m} for an illustration of $X_i$.
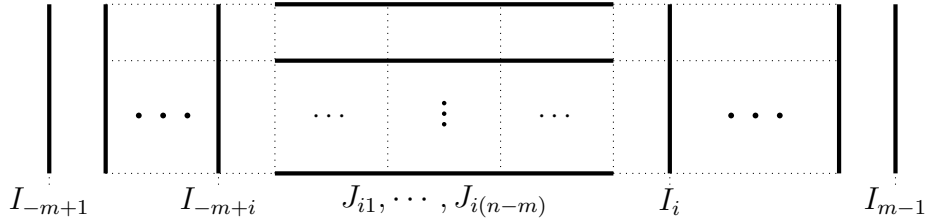
\begin{figure}[htbp]
\begin{center}
\begin{tikzpicture}[scale=0.75]
\draw[dotted](2,1)--(15,1);
\draw[dotted](2,3)--(15,3);\draw[dotted](2,4)--(15,4);
\draw[dotted](1,0.8)--(1,4);\draw[dotted](2,1)--(2,4);
\draw[dotted](4,0.8)--(4,4);
\draw[dotted](5,1)--(5,4);
\draw[dotted](7,1)--(7,4);
\draw[dotted](9,1)--(9,4);
\draw[dotted](11,1)--(11,4);
\draw[dotted](12,0.8)--(12,4);
\draw[dotted](16,0.8)--(16,4);

\draw[ultra thick](1,1)--(1,4);\draw[ultra thick](2,1)--(2,4);\draw[ultra thick](4,1)--(4,4);
\node at (3.1,2){\LARGE $\cdots$};

\draw[ultra thick](16,1)--(16,4);\draw[ultra thick](15,1)--(15,4);\draw[ultra thick](12,1)--(12,4);
\node at (13.6,2){\LARGE $\cdots$};

\draw[ultra thick](5,4)--(11,4);\draw[ultra thick](5,3)--(11,3);\draw[ultra thick](5,1)--(11,1);
\node at (6,2){\small $\cdots$};\node at (10,2){\small $\cdots$};\node at (8,2.2){\LARGE $\vdots$};

\node at (1,0.5){$I_{-m+1}$};
\node at (4,0.5){$I_{-m+i}$};
\node at (8,0.5){$J_{i1}, \cdots, J_{i(n-m)}$};
\node at (12,0.5){$I_i$};
\node at (16,0.5){$I_{m-1}$};
\end{tikzpicture}
\caption{The set $X_i$ with $n-m$ horizontal line segments and $m$ vertical line segments.}\label{fig:n-m,m}
\end{center}
\end{figure}

For $j \in [(m-1)(n-m-1)]$, let 
\[A_j=X_q \text{ if } (q-1)(n-m-1) <j \leq q(n-m-1).\]
We claim that the family $\mathcal{A}=(A_j : j \in [(m-1)(n-m-1)])$ does not have an SDR of size $n$.

Suppose that $R$ is an SDR of size $n$ for $\mathcal{A}$.
By the construction of $\mathcal{A}$, $\bigcup_j A_j$ has at most $n-m$ disjoint horizontal line segments and at most $2m-2$ disjoint vertical line segments.
Thus $R$ contains $t \geq m$ vertical line segments and at least one horizontal line segment.
Suppose the vertical line segments of $R$ are $I_{j_1},\ldots,I_{j_t}$ for some integers 
$j_1,\ldots,j_t$ with \[-m+1 \leq j_1 < \cdots < j_k < 0 < j_{k+1} < \cdots < j_t \leq m-1.\]
Consider a horizontal line segment $J_{ab} \in R$.
Observe that $j_k \geq -m+k$ and $j_{k+1} \leq m-1-(t-k-1) = m-t+k$.
Since $J_{ab}$ does not meet any of the vertical line segments of $R$, $[-m+k+1,m-t+k-1]$ must contain $[-m+a+1,a-1]$. 
Thus it follows that $t = m$ and $a = k$, and in particular, this implies $R \subset X_k$.
Since there are only $n-m-1$ copies of $X_k$, $R$ can contain at most $n-m-1$ horizontal line segments.
Then $R$ consists of $m$ vertical line segments and at most $n-m-1$ horizontal line segments, which is a contradiction to $|R| = n$.
Therefore, there is no SDR of size $n$ for $\mathcal{A}$.\qed
\end{example}
By setting $m = \left\lfloor \frac{n}{2} \right\rfloor$, Example~\ref{ex:n-m,m} gives a family of $(\left\lfloor \frac{n}{2} \right\rfloor-1)(\left\lceil \frac{n}{2} \right\rceil-1)$ sets, each consisting of $m$ vertical line segments and $n-m$ horizontal line segments, that does not have an SDR of size $n$.
It is natural to ask if the above lower bound is asymptotically best possible.

\smallskip

\begin{question}~\label{que:lowerbound}
Does there exist a constant $C$ such that for every family of $Cnk$ sets, each consisting of $n-k$ horizontal line segments and $k$ vertical line segments in the plane, there is an SDR of size $n$?
\end{question}

We conclude the discussion with the following stronger question.
\begin{question}
Does there exist a constant $C$ such that $f_{\mathcal{J}_U}(n) \leq Cn^2$ when $|U| = 2$?
\end{question}

\end{document}